\newcommand{\R}{{\mathbb{R}}}
\newcommand{\N}{{\mathbb{N}}}
\begin{document}

\title{ On the feasibility and convergence of the inexact Newton method under minor conditions on the error terms}
\subtitle{}


\author{Eduardo Ramos \and
Marcio Gameiro \and Victor Nolasco
}


\institute{E. Ramos \at
Instituto de Ci\^{e}ncias Matem\'{a}ticas e de Computa\c{c}\~{a}o, Universidade de S\~{a}o Paulo, Caixa Postal 668, 13560-970, S\~{a}o Carlos, SP, Brazil \\
\email{eduardoramos@usp.br}
\and
M. Gameiro \at
Instituto de Ci\^{e}ncias Matem\'{a}ticas e de Computa\c{c}\~{a}o, Universidade de S\~{a}o Paulo, Caixa Postal 668, 13560-970, S\~{a}o Carlos, SP, Brazil \\
\email{gameiro@icmc.usp.br}
\and
V. Nolasco \at
Instituto de Ci\^{e}ncias Matem\'{a}ticas e de Computa\c{c}\~{a}o, Universidade de S\~{a}o Paulo, Caixa Postal 668, 13560-970, S\~{a}o Carlos, SP, Brazil \\
\email{victor.nolasco@usp.br}
}

\date{Received: date / Accepted: date}

\maketitle

\begin{abstract}
In this paper we introduce a semi-local theorem for the feasibility and convergence of the inexact Newton method, regarding the sequence $x_{k+1} = x_k - Df(x_k)^{-1}f(x_k) + r_k$, where $r_k$ represents the error in each step. Unlike the previous results of this type in the literature, we prove the feasibility of the inexact Newton method under the minor hypothesis that the error $r_k$ is bounded by a small constant to be computed, and moreover we prove results concerning the convergence of the sequence $x_k$ to the solution under this hypothesis. Moreover, we show how to apply this this method to compute rigorously zeros for two-point boundary value problems of Neumann type. Finally, we apply it to a version of the Cahn–Hilliard equation.

\end{abstract}

\section{Introduction}

Regarding semi-local guarantees of convergence for the Newton method we have the celebrated Kantorovich (or Newton-Kantorovich) theorem, which gives sufficient conditions for the convergence of the Newton method $x_{k+1}=x_k-Df(x_k)^{-1} f(x_k)$ to a zero of $f$. On the other hand, it should be noted that the exact Newton method is usually impracticable for the purpose of rigorous computations since we cannot expect in general to perform the computations of $x_k-Df(x_k)^{-1}f(x_k)$ without numerical errors. These numerical errors can be incorporated in the computations by means of error bounds, but in general can not be avoided, specially in infinite dimensional setting. One option to circumvent this problem is to consider the more general inexact Newton method $x_{k+1}=x_k-Df(x_k)^{-1}f(x_k)+r_k$ in which $r_k$ represents the numerical error during each step of the computation.

In this regard, an important question is to give semi-local conditions, that is, conditions in terms on the initial point $x_0$ analogous to the ones in the Newton-Kantorovich theorem in order for the inexact Newton method to be feasible and converge under conditions on the decaying rate of $r_k$.

In the recent literature, such as \cite{2019-Argyros,2014-Argyros,2012-Ferreira,2008-Mi,2009-Weipin}, semi-local conditions for the inexact Newton method to be feasible and convergent have been given in the form $\left\|r_k\right\|\leq \eta\left\|f(x_k)\right\|$ under conditions on the bounding term $\eta$. On the other hand, since the sequence $x_k$ is know to converge fast to the zeros of $f$, it follows that $f(x_k)$ converges fast to $0$ and thus the same must hold for error term $r_k$ under these conditions. This fast decay rate for $r_k$ on the other hand is a very strong condition, which is hard to obtain in actual applications.

With this in mind, we present in this paper a condition for the feasibility under the minor condition that the error terms $r_k$ be bounded by a small constant $d$ to be computed, and we prove convergence of the sequence $x_{k+1}=x_k-Df(x_k)^{-1}f(x_k) + r_k$ under the minor condition that $r_k\to 0$ as $k\to \infty$. To our knowledge, this is the first time that such kind of conditions has been given to the inexact Newton method. Moreover, our condition is very similar to that of the Newton-Kantorovich theorem, in that the crucial condition for our result to hold is the existence of zeros of a related unidimensional function $g(t)$ given by
\begin{equation*}
g(t)=\eta_d-1_dt+\frac{3K}{2}t^2
\end{equation*}
where $\eta_d=\eta+d$, $1_d=1+Kd$, $\eta$ is a bound for $Df(x_0)^{-1}f(x_0)$ and $K$ a Lipschitz constant for the function $Df(x_0)^{-1}Df(x)$ in the domain. Thus, together with bijectivity modulus \cite{RamosTwopoint2020}, we show how to compute rigorously zeros for a differential operator. Finally, we apply it in a version of the Cahn–Hilliard equation. As we shall show our approach has a simple form even when it is applied in non-linear problems, against the most part of non-linear methods. 


The paper is organized as follows: In Section~\ref{sec:Preliminaries} we present the preliminaries results used to prove the main result of the paper. In Section~\ref{sec:InexactNewtonMethod} we present the main result, which corresponds to a new theorem for the feasibility and convergence of the inexact Newton method. In Section~\ref{sec:BijectivityModulus} show a variation of the main theorem more suitable for applications, using a new definition called bijectivity modulus of an operator. In Section~\ref{sec:Applications} we apply the previous result in a version of the Cahn–Hilliard equation. In Section~\ref{sec:Conclusion} the conclusion is presented.

\section{Preliminaries}
\label{sec:Preliminaries}

Let $(X,\left\|.\right\|_X)$ and $(Y,\left\|.\right\|_Y)$ denote Banach spaces and $(\mathcal{L}(X,Y),\left\|.\right\|_{\mathcal{L}(X,Y)})$ denote the Banach space of bounded linear operators from $X$ to $Y$ under the strong norm $\left\|.\right\|_{\mathcal{L}(X,Y)}$ given by $\left\|G\right\|_{\mathcal{L}(X,Y)} = \sup_{\left\|x\right\|_X\leq 1}\left\|G(x)\right\|_Y$ for all $G\in \mathcal{L}(X,Y)$. Moreover, we denote by $B(x_0,r)_{X}$ and $\bar{B}(x_0,r)_{X}$, respectively, the open and closed balls with radius $r$ centered at $x_0$ under the $\left\|.\right\|_{X}$ norm.

The following well known result (for a proof see \cite[Theorem 1.4, p. 192]{1980-Taylor}) tells us that the open ball of radius $1$ centered at the identity in $\mathcal{L}(X,Y)$ is composed only of invertible linear operators.

\begin{proposition}[Neumann series]
\label{thm:Neumann_series}
Given $F\in \mathcal{L}(X,Y)$, suppose that the inequality $\left\|I-F\right\|<1$ holds. Then $F$ is invertible with $F^{-1}\in \mathcal{L}(Y,X)$ and moreover
\begin{equation*} \left\|F^{-1}\right\|\leq \frac{1}{1-\left\|I-F\right\|}.
\end{equation*}
\end{proposition}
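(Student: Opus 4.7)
The plan is to construct the inverse explicitly as a Neumann (geometric) series. Setting $G := I - F$, the hypothesis gives $\|G\| < 1$; note that $I - F$ only makes sense when the identity is defined on the target, so I will implicitly work in $\mathcal{L}(X,X)$. I aim to define $S := \sum_{k=0}^{\infty} G^k$, show that the series converges in operator norm, and verify that the limit acts as a two-sided inverse to $F = I - G$.

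First I would establish convergence of the partial sums $S_n := \sum_{k=0}^{n} G^k$. By submultiplicativity of the operator norm one has $\|G^k\| \leq \|G\|^k$, and since $\|G\| < 1$ the scalar geometric series $\sum_{k=0}^{\infty} \|G\|^k$ converges to $\tfrac{1}{1-\|G\|}$. Hence $\{S_n\}$ is a Cauchy sequence in the Banach space $\mathcal{L}(X,X)$, so it admits a limit $S \in \mathcal{L}(X,X)$ with
\begin{equation*}
\|S\| \;\leq\; \sum_{k=0}^{\infty} \|G\|^{k} \;=\; \frac{1}{1 - \|G\|} \;=\; \frac{1}{1 - \|I - F\|}.
\end{equation*}

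Next I would verify $S = F^{-1}$ by a telescoping identity. A direct computation yields
\begin{equation*}
F\,S_n \;=\; (I - G) S_n \;=\; \sum_{k=0}^{n} G^{k} - \sum_{k=1}^{n+1} G^{k} \;=\; I - G^{n+1},
\end{equation*}
and similarly $S_n F = I - G^{n+1}$. Since $\|G^{n+1}\| \leq \|G\|^{n+1} \to 0$, passing to the limit in operator norm (using continuity of left and right composition by the fixed operator $F$) gives $FS = SF = I$. Therefore $F$ is invertible with $F^{-1} = S$, and the displayed bound on $\|S\|$ is exactly the stated estimate on $\|F^{-1}\|$.

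The argument is entirely routine: the only two nontrivial ingredients are completeness of $\mathcal{L}(X,X)$ (to upgrade absolute summability $\sum \|G\|^k < \infty$ to norm convergence of $\sum G^k$) and the telescoping identity, which does the algebraic work of exhibiting the inverse. I do not anticipate any genuine obstacle beyond bookkeeping.
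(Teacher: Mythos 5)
Your proof is correct and is exactly the classical Neumann series argument; the paper itself gives no proof, simply citing Taylor, whose proof is this same construction (geometric series in the Banach algebra, telescoping identity, norm bound by the scalar geometric series). Your remark about implicitly working in $\mathcal{L}(X,X)$ is the right reading of the statement, since $I-F$ only makes sense once $X$ and $Y$ are identified.
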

The following result is a direct consequence of this Proposition \ref{thm:Neumann_series}.

\begin{corollary}
\label{cor1}
Let $F\in \mathcal{L}(X,Y)$ and $G\in \mathcal{L}(Y,X)$ such that $F$ is invertible with $F^{-1}\in \mathcal{L}(Y,X)$ and suppose that $\left\|I - FG\right\|< 1$. Then $G$ is invertible with $G^{-1}\in \mathcal{L}(X,Y)$ and moreover
\begin{equation*}
\begin{aligned}
\left\|G^{-1}F^{-1}\right\|\leq \frac{1}{1-\left\|I-FG\right\|}
\end{aligned}
\end{equation*}
\end{corollary}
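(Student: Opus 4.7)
The plan is to apply Proposition~\ref{thm:Neumann_series} directly to the composition $FG$, which lies in $\mathcal{L}(Y,Y)$, since the hypothesis $\|I - FG\| < 1$ is exactly what Proposition~\ref{thm:Neumann_series} requires (with $Y$ playing the role of the underlying space). This yields immediately that $FG$ is invertible with $(FG)^{-1} \in \mathcal{L}(Y,Y)$ and the bound
\begin{equation*}
\|(FG)^{-1}\| \leq \frac{1}{1-\|I-FG\|}.
\end{equation*}

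The next step is to leverage the invertibility of $F$ in order to promote the invertibility of $FG$ into invertibility of $G$ alone. Writing $H = (FG)^{-1}$, the relation $HFG = I_Y$ shows that $HF$ is a left inverse of $G$. To get the right inverse, I would use $FGH = I_Y$: since $F$ is invertible, applying $F^{-1}$ on the left gives $GH = F^{-1}$, and hence $GHF = F^{-1}F = I_X$. So $HF$ is a two-sided inverse of $G$, which proves $G^{-1} = (FG)^{-1} F \in \mathcal{L}(X,Y)$.

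The norm estimate is then essentially free: the identity $G^{-1}F^{-1} = (FG)^{-1} F F^{-1} = (FG)^{-1}$ reduces the left-hand side of the desired inequality to $\|(FG)^{-1}\|$, and the bound from Proposition~\ref{thm:Neumann_series} finishes the proof. The only care needed is in the bookkeeping of domains and codomains when verifying both one-sided inverses of $G$; everything else is a direct consequence of the preceding proposition, so there is no substantive obstacle.
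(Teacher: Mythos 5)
Your proposal is correct and follows essentially the same route as the paper: apply the Neumann series result to $FG$, identify $(FG)^{-1}F$ as a two-sided inverse of $G$ using the invertibility of $F$, and then observe that $G^{-1}F^{-1}=(FG)^{-1}$ to obtain the norm bound. The only difference is notational (you set $H=(FG)^{-1}$ where the paper sets $H=FG$), and your verification of the right inverse via $GH=F^{-1}$ is equivalent to the paper's use of $G=F^{-1}H$.
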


\begin{proof}
Due to Proposition \ref{thm:Neumann_series} we know that $H=FG$ is invertible with $H^{-1}\in \mathcal{L}(Y,Y)$ and
\begin{equation}
\label{hineq}
\left\|H^{-1}\right\|\leq \frac{1}{1-\left\|I-FG\right\|}.
\end{equation}
On the other hand, from the associativity property of compositions of linear operators and $G=F^{-1}H$, we have
\begin{equation*}
\begin{aligned}
(H^{-1}F)G = H^{-1}(FG) = H^{-1}H=I ~~\text{and} \\
G(H^{-1}F) = (F^{-1}H)(H^{-1}F)= I.
\end{aligned}
\end{equation*}
Thus, it follows that $G$ is invertible with inverse $G^{-1}=H^{-1}F$, which implies that $G^{-1}F^{-1}=H^{-1}$. Therefore the norm estimate in the proof follows from (\ref{hineq}).
\end{proof}

As a classical consequence of the Hahn-Banach Theorem (see \cite[Theorem 3.2, p. 134]{1980-Taylor}) we have the next result.

\begin{proposition}
\label{hahn}
Given a non-null element $x\in X$, there exists  $\phi\in \mathcal{L}(X,\R)$ such that $\left\|\phi\right\|_{\mathcal{L}(X,\R)}=1$ and $\phi(x)=\left\|x\right\|_X$.
\end{proposition}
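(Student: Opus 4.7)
The plan is to produce $\phi$ by first defining a candidate functional on the one-dimensional subspace spanned by $x$ and then extending it to the whole space via the Hahn--Banach theorem, taking care that the norm of the extension turns out to be exactly $1$.

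First I would let $M = \{tx : t \in \R\}$, which is a linear subspace of $X$ since $x \neq 0$, and define $\phi_0 : M \to \R$ by $\phi_0(tx) = t\,\|x\|_X$. This $\phi_0$ is well defined (because the representation $y = tx$ with $y \in M$ is unique, as $x \neq 0$) and linear. For the norm I would compute $|\phi_0(tx)| = |t|\,\|x\|_X = \|tx\|_X$, which gives $\|\phi_0\|_{\mathcal{L}(M,\R)} = 1$, and of course $\phi_0(x) = \|x\|_X$.

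Next I would invoke the Hahn--Banach extension theorem (norm-preserving form, which is what the citation \cite[Theorem 3.2, p.~134]{1980-Taylor} provides): there exists $\phi \in \mathcal{L}(X,\R)$ with $\phi|_M = \phi_0$ and $\|\phi\|_{\mathcal{L}(X,\R)} = \|\phi_0\|_{\mathcal{L}(M,\R)} = 1$. In particular $\phi(x) = \phi_0(x) = \|x\|_X$, so both requirements are satisfied simultaneously.

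The only point that requires a little care is making sure the final norm is exactly $1$ rather than merely at most $1$. The Hahn--Banach extension gives $\|\phi\| \leq 1$ for free, while the reverse inequality follows from the concrete value at $x$: $\|\phi\| \geq |\phi(x)|/\|x\|_X = \|x\|_X / \|x\|_X = 1$, which uses that $x$ is non-null. Thus the hypothesis $x \neq 0$ is used precisely to keep $M$ genuinely one-dimensional (so $\phi_0$ is well defined) and to produce the lower bound on $\|\phi\|$. There is no substantial obstacle here; the argument is a direct unpacking of the standard corollary of Hahn--Banach.
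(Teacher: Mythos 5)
Your proof is correct and is exactly the standard argument the paper is implicitly invoking: the paper gives no proof of its own, merely citing the Hahn--Banach theorem, and your extension of $\phi_0(tx)=t\|x\|_X$ from the span of $x$ with preserved norm is the classical way to obtain the supporting functional. Nothing to add.
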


We say that $f \colon U \subset X\to Y$ is differentiable if it is Fr\'{e}chet differentiable at every $x \in U$ and we denote its Fr\'{e}chet derivative operator by $Df \colon U \subset X \to \mathcal{L}(X,Y)$ (see \cite{1971-Cartan,2013-Garling}).

The following theorem can be seen as a generalization of the Taylor Theorem for twice differentiable functions, suitable for functions $f$ with Lipschitz continuous derivatives. Hence, we call it simply Taylor Theorem.

\begin{theorem}[Taylor Theorem]
\label{taylor}
Let $f \colon U \subset X\to Y$ be differentiable in the open set $U$, let $A \subset U$ be convex, $x_0 \in A$, $\kappa\in \R$ and suppose that
\begin{equation*}
\left\|Df(x)-Df(x_0)\right\|_{\mathcal{L}(X,Y)} \leq
\kappa \left\|x-x_0\right\|_X
\end{equation*}
for all $x\in A$. Then, it follows that
\begin{equation*}
\left\|f(x) - f(x_0) - Df(x_0)(x-x_0)\right\|_Y \leq \frac{\kappa}{2}\left\|x-x_0\right\|_X^2
\end{equation*}
for all $x \in A$.
\end{theorem}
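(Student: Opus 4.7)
The plan is to reduce the vector-valued inequality to a scalar one via the Hahn--Banach consequence stated in Proposition \ref{hahn}, and then prove the resulting scalar inequality using the ordinary one-variable fundamental theorem of calculus applied along the segment from $x_0$ to $x$. The convexity of $A$ is precisely what is needed to guarantee that this segment lies inside $A\subset U$, so that $Df$ is defined on it and the Lipschitz-type bound is available pointwise.

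Concretely, I would first introduce the auxiliary curve $\phi\colon[0,1]\to Y$ by $\phi(t)=f(x_0+t(x-x_0))-f(x_0)-t\,Df(x_0)(x-x_0)$. By the chain rule, $\phi$ is differentiable with $\phi'(t)=[Df(x_0+t(x-x_0))-Df(x_0)](x-x_0)$, and the hypothesis gives the pointwise estimate $\|\phi'(t)\|_Y\le\kappa\,t\,\|x-x_0\|_X^{2}$. If $f(x)-f(x_0)-Df(x_0)(x-x_0)=0$ the conclusion is immediate, so one may assume $v:=\phi(1)-\phi(0)$ is nonzero and apply Proposition \ref{hahn} to obtain $\psi\in\mathcal{L}(Y,\R)$ with $\|\psi\|_{\mathcal{L}(Y,\R)}=1$ and $\psi(v)=\|v\|_Y$.

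Setting $h(t):=\psi(\phi(t))$ produces a real-valued differentiable function on $[0,1]$ with $h'(t)=\psi(\phi'(t))$, so that $|h'(t)|\le\|\phi'(t)\|_Y\le\kappa\,t\,\|x-x_0\|_X^{2}$. The ordinary fundamental theorem of calculus then yields
\[
\|v\|_Y \;=\; h(1)-h(0) \;=\; \int_0^1 h'(t)\,dt \;\le\; \int_0^1 \kappa\,t\,\|x-x_0\|_X^{2}\,dt \;=\; \frac{\kappa}{2}\,\|x-x_0\|_X^{2},
\]
which is exactly the desired bound.

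The main obstacle is not a deep one: it is the bookkeeping around differentiating a $Y$-valued function on an interval, together with the small trick of using Proposition \ref{hahn} to pass to the scalar case. This reduction is what allows the argument to avoid introducing any Bochner or Riemann integration theory for $Y$-valued functions, which would otherwise be needed to integrate $\phi'$ directly.
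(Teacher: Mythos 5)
Your proposal is correct and follows essentially the same route as the paper's proof: scalarize via the Hahn--Banach functional from Proposition \ref{hahn} and apply the fundamental theorem of calculus along the segment $t\mapsto x_0+t(x-x_0)$, which convexity of $A$ keeps inside the domain. The only difference is cosmetic --- you fold the affine term into the auxiliary curve $\phi(t)$, whereas the paper works with $g(t)=\phi(f(x_0+th))$ and bounds $\left|g(1)-g(0)-g'(0)\right|$ --- so the two arguments are the same in substance.
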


\begin{proof}
If $x=x_0$, the theorem is trivially true. Otherwise, let
\begin{equation*}
r = \frac{1}{\left\|x-x_0\right\|_X^2}\left(f(x)-f(x_0)-Df(x_0)(x-x_0)\right).
\end{equation*}
The theorem is equivalent to proving that $\left\|r\right\|_Y\leq\frac{\kappa}{2}$. If $r= 0$, then once again there is nothing to prove. Otherwise, by Proposition~\ref{hahn}, there exists $\phi\in \mathcal{L}(Y,\R)$ such that $\left\|\phi\right\|_{\mathcal{L}(Y,\R)}=1$ and $\phi(r)=\left\|r\right\|_Y$.

Now, since $A$ is convex, it follows that $x_0+t(x-x_0)\in A$ for all $t\in [0,1]$ and since $U$ is open, there exist an open set $I$ such that $[0,1]\subset I$ and $x_0+t(x-x_0)\in U$ for all $t\in I$.

Thus,  if we denote $h=x-x_0$, by the chain rule it follows that the mapping $ g \colon I \to \R$ defined by $g(t)=\phi(f(x_0+th))$ is differentiable with $g'(t) = \phi(Df(x_0+t h)(h))$ for all $t\in [0,1]$. Moreover, since $\left\|\phi\right\|_{\mathcal{L}(Y,\R)}=1$ and since $\phi$ is linear, it follows by the hypothesis that
\begin{equation*}
\begin{aligned}\left|g'(t) - g'(0)\right|= \left|\phi((Df(x_0+t h)-Df(x_0))(h))\right| \\
\end{aligned}
\end{equation*}
\begin{equation*}
\begin{aligned}
\leq\left\|\phi\right\|_{\mathcal{L}(Y,\R)}\left\|Df(x_0+t h)-Df(x_0)\right\|_{\mathcal{L}(X,Y)}\left\|h\right\|_X\leq \kappa\left\|th\right\|_X\left\|h\right\|_X= \kappa t\left\|h\right\|_X^2
\end{aligned}
\end{equation*}
for all $t\in [0,1]$ and thus, from the Fundamental Theorem of Calculus, we have
\begin{equation*}
\begin{aligned}
\left|g(1) - g(0) - g'(0)\right| = \left|\int_0^1 (g'(t) - g'(0))\right|dt \leq \\
\end{aligned}
\end{equation*}
\begin{equation*}
\begin{aligned}
\int_0^1\left|g'(t)-g'(0)\right|dt \leq \int_0^1 \kappa t\left\|h\right\|_X^2 dt=\frac{\kappa \left\|h\right\|_X^2}{2}.
\end{aligned}
\end{equation*}
Finally, since $\left|\phi(r)\right|=\phi(r)=\left\|r\right\|_Y$, $g(0)=\phi(f(x_0))$, $g(1)=\phi(f(x))$, $g'(0)=\phi(Df(x_0)(h))$, and due to the linearity of $\phi$ we have
\begin{equation*}
\begin{aligned}
\left\|r\right\|_Y=\left|\phi(r)\right|=\left|\frac{1}{\left\|h\right\|_X^2}\left(\phi(f(x)) - \phi(f(x_0)) - \phi(Df(x_0)(h))\right)\right|=\\
\end{aligned}
\end{equation*}
\begin{equation*}
\begin{aligned}
\frac{1}{\left\|h\right\|_X^2}\left|g(1) - g(0) - g'(0)\right|\leq  \frac{\kappa}{2},
\end{aligned}
\end{equation*}
which concludes the proof.
\end{proof}

\section{Inexact Newton method}\label{sec:InexactNewtonMethod}

The Newton method, defined by the classical recurrence relation $x_{k+1}=x_k-Df(x_k)^{-1} f(x_k)$ remains one of the main computational methods to approximate solutions of nonlinear equations. In this section we present a version of the Kantorovich (or Newton-Kantorovich) theorem aimed to the Newton method when errors are allowed during the computation of each step, the so called \textit{Inexact Newton method}.
We begin by defining the Newton operator and feasibility for the Newton method.

\begin{definition}[Newton operator]
\label{newtop}
Let $X$ and $Y$ be Banach spaces, $U \subset X$ open, and $f \colon U \subset X \to Y$ differentiable in $U$. We define the \emph{Newton operator} $T_f \colon X \to X$ by
\begin{equation*}
T_f(x) =
\begin{cases}
x - Df(x)^{-1}f(x) & \text{if}~ x\in U ~\text{and}~ Df(x) ~\text{is invertible} \\
0 & \text{otherwise}
\end{cases}
\end{equation*}
\end{definition}

Regarding the above definition, although usually the Newton operator $T_f$ is defined only for values $x \in U$ such that $Df(x)$ is invertible, it is convenient for the subsequent definitions and theorems to extend its definition to other values $x$.

\begin{definition}[Feasibility for Newton method]
Let $X$ and $Y$ be Banach spaces, $U \subset X$ open, $A\subset U$, $f \colon U \subset X \to Y$ differentiable in $U$, and let $\{ x_k \}_{k \in \N}$ be a sequence in $X$. We say that $\{ x_k \}_{k \in \N}$ is \emph{feasible} for the Newton method for $f$ in $A$, if $x_k \in A$ and $Df(x_k)$ is invertible for all $k \in \N$.
\end{definition}
Thus, from the above definitions, if $\{ x_k \}_{k \in \N}$ is feasible for the Newton method in $A\subset U$, and $\{ x_k \}_{k \in \N}$ satisfies the recurrence relation $x_{k+1} = T_f(x_k)$ for all $k \in \N$, then it follows that
\begin{equation*}
x_{k+1} = x_k - Df(x_k)^{-1}f(x_k),
\end{equation*}
for all $k\in \N$, which corresponds to the classical Newton method.

Regarding the feasibility and convergence of the classical Newton method, we have the celebrated Newton-Kantorovich theorem as the main semi-local result guaranteeing feasibility and convergence of the sequence $\{ x_k \}_{k \in \N}$ to a zero of $f$ (see \cite{2017-Fernandez,1982-Kantorovich,2016-Lecerf}). Based on the Newton-Kantorovich theorem we propose the following theorem regarding the feasibility and convergence of the Inexact Newton method.

\begin{theorem}
\label{fundtheo}
Let $X$ and $Y$ be Banach spaces, $U \subset X$ an open set, $f \colon U \subset X \to Y$ a differentiable function in $U$, $x_0 \in U$, $R > 0$ satisfying $\bar{B}(x_0, R) \subset U$, let $\{ r_k \}_{k \in \N}$ be a sequence in $X$, and let $\eta \geq 0$, $K>0$ and $d>0$. Suppose that
\begin{enumerate}[$(a)$] 
\itemsep1em
\item $Df(x_0)$ is invertible with $Df(x_0)^{-1} \in \mathcal{L}(X,Y)$.
\item $\left\| Df(x_0)^{-1}f(x_0) \right\| \leq \eta$ and $\left\| Df(x_0)^{-1}(Df(x)-Df(y)) \right\| \leq K \left\| x-y \right\|$ for all $x,y \in \bar{B}(x_0,R)$.
\item $\left\| r_k \right\| \leq d$ for all $k \in \N$.
\item $g_d(t) = \eta_d-1_d t+\dfrac{3K}{2}t^2$ has a smallest real zero $t_d^*\leq R$ and moreover $d\leq \dfrac{1}{K}$, where $\eta_d = \eta+d$ and $1_d=1+Kd$.
\end{enumerate}

Then $0<t^*_d\leq \frac{1}{K}\left(1 - \frac{1}{\sqrt{3}}\right)$ and moreover

\begin{enumerate}[(i)]
\itemsep1em 
\item The function $f$ has a unique zero $x^* \in \bar{B}\left(x_0,t_d^*\right)$.
\item The sequence $\{ x_k \}_{k\in \N}$ defined by $x_{k+1} = T_f(x_k)+r_k$ for all $k\in \N$ is feasible for the Newton method for $f$ in $\bar{B}(x_0,t_d^*)$.
\item $\left\|x^*-x_{k+1}\right\|\leq \dfrac{\sqrt{3}}{2}K\left\|x^*-x_k\right\|^2+\left\|r_k\right\|$ for all $k\in \N$.
\item $\lim_{k\to \infty}x_k=x^*$ in case $\lim_{k\to \infty}\left\|r_k\right\|=0$.
\end{enumerate}
\end{theorem}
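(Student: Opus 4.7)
\textit{Plan.} The plan is to prove the bound $t_d^* \leq (1-1/\sqrt{3})/K$, then (ii), (i), (iii), (iv) in that order, relying throughout on the Neumann series (Proposition~\ref{thm:Neumann_series}), its product form (Corollary~\ref{cor1}), and the Taylor remainder (Theorem~\ref{taylor}) applied to $\tilde{f} := Df(x_0)^{-1} f$, whose derivative is $K$-Lipschitz by hypothesis~(b). First, since $g_d$ is an upward-opening parabola with $g_d(0) = \eta_d \geq 0$ admitting a real zero, its discriminant $(1+Kd)^2 - 6K(\eta+d) \geq 0$ combined with $\eta \geq 0$ forces $(Kd)^2 - 4Kd + 1 \geq 0$, and together with $Kd \leq 1$ this gives $Kd \leq 2 - \sqrt{3}$. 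Hence $t_d^* \leq 1_d/(3K) \leq (3-\sqrt{3})/(3K) = (1-1/\sqrt{3})/K$, whence $Kt_d^* < 1/2$ and $1/(1-Kt_d^*) \leq \sqrt{3}$ -- the numerical inequalities that drive everything else.

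For feasibility (ii), the key identification is that the scalar map
\begin{equation*}
\phi(t) = \frac{\eta + \frac{K}{2} t^2}{1 - Kt} + d
\end{equation*}
satisfies the algebraic identity $(\phi(t) - t)(1 - Kt) = g_d(t)$, so its fixed points in $[0, 1/K)$ are precisely the roots of $g_d$. Setting $t_0 = 0$, $t_{k+1} = \phi(t_k)$, the fact that $g_d \geq 0$ on $[0, t_d^*]$ gives $\phi(t) \geq t$ there, and monotonicity of $\phi$ makes $\{t_k\}$ nondecreasing and bounded by $t_d^*$, hence convergent to $t_d^*$. I would then prove by induction that $\|x_k - x_0\| \leq t_k$ (which in particular gives $Df(x_k)$ invertible via Proposition~\ref{thm:Neumann_series}): the inductive step uses Theorem~\ref{taylor} applied to $\tilde{f}$ at $x_k$ evaluated at $x_0$ to write $\tilde{f}(x_k) = \tilde{f}(x_0) + D\tilde{f}(x_k)(x_k - x_0) + R_k$ with $\|R_k\| \leq (K/2)\|x_k - x_0\|^2$; premultiplying by $D\tilde{f}(x_k)^{-1}$ yields
\begin{equation*}
T_f(x_k) - x_0 = -D\tilde{f}(x_k)^{-1}\bigl[\tilde{f}(x_0) + R_k\bigr],
\end{equation*}
whose norm is at most $[\eta + (K/2) t_k^2]/(1 - Kt_k) = \phi(t_k) - d$ by Corollary~\ref{cor1}, so $\|x_{k+1} - x_0\| \leq \phi(t_k) = t_{k+1}$ after adding $\|r_k\| \leq d$.

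Existence of $x^* \in \bar{B}(x_0, t_d^*)$ in (i) follows by specializing the same induction to the exact Newton sequence $r_k \equiv 0$: using Theorem~\ref{taylor} at $x_{k-1}$ together with the Newton identity $D\tilde{f}(x_{k-1})(x_k - x_{k-1}) = -\tilde{f}(x_{k-1})$, one shows $\|x_{k+1} - x_k\|$ decays quadratically in $\|x_k - x_{k-1}\|$, so the exact sequence is Cauchy inside $\bar{B}(x_0, t_d^*)$ and its limit $x^*$ satisfies $f(x^*) = 0$ by continuity. Uniqueness: any other zero $y^* \in \bar{B}(x_0, t_d^*)$ satisfies $\|y^* - x^*\| \leq (K/2)\|y^*-x^*\|^2/(1-Kt_d^*)$ by Theorem~\ref{taylor} at $x^*$ and Corollary~\ref{cor1}, which together with $\|y^*-x^*\| \leq 2 t_d^*$ and $Kt_d^* < 1/2$ forces $y^* = x^*$. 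Estimate (iii) is then a direct computation: expanding
\begin{equation*}
x_{k+1} - x^* = Df(x_k)^{-1}\bigl[f(x^*) - f(x_k) - Df(x_k)(x^* - x_k)\bigr] + r_k,
\end{equation*}
Theorem~\ref{taylor} at $x_k$ plus $\|Df(x_k)^{-1}Df(x_0)\| \leq \sqrt{3}$ (by Corollary~\ref{cor1}) immediately gives the claimed bound. Finally (iv) follows from (iii) by observing that $\|x_k - x^*\| \leq 2 t_d^*$ linearizes the estimate to $\|x_{k+1} - x^*\| \leq (\sqrt{3} - 1)\|x_k - x^*\| + \|r_k\|$, an inhomogeneous contraction with rate $\sqrt{3}-1 < 1$, whose unrolling tends to $0$ whenever $\|r_k\| \to 0$.

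The main obstacle will be identifying the correct majorizing map $\phi$ and verifying the identity $(\phi(t) - t)(1-Kt) = g_d(t)$, which ties the polynomial $g_d$ to a natural scalar iteration; once that is done, every remaining step reduces to a short Neumann--Taylor calculation, and the characteristic coefficient $3K/2$ in $g_d$ emerges naturally as $K/2$ (from the Taylor remainder) plus $K$ (from clearing the $1-Kt$ denominator in $\phi$).
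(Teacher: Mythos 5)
Your proposal is correct, and its core takes a genuinely different route from the paper's. The paper proves feasibility and existence by studying the modified Newton operators $L_z(x)=x-Df(z)^{-1}f(x)$: it shows that $L_{x_0}$ is a contraction of $\bar{B}(x_0,t_d^*)$ with constant $Kt_d^*$, that $Df(z)$ is invertible on the ball with $\|Df(z)^{-1}Df(x_0)\|\leq(1-Kt_d^*)^{-1}$, and that $L_z$ maps the ball into $\bar{B}(x_0,t_d^*-d)$; existence and uniqueness of $x^*$ then come from the Banach fixed point theorem, and feasibility from adding back the error of size $d$. You instead run the classical Kantorovich majorant argument: your identity $(\phi(t)-t)(1-Kt)=g_d(t)$ checks out (its evaluation at $t=t_d^*$ is precisely the paper's chain of equalities (\ref{eqq})), the resulting scalar iteration $t_{k+1}=\phi(t_k)$ is indeed monotone and bounded by $t_d^*$ since $g_d\geq 0$ on $[0,t_d^*]$ and $\phi$ is increasing there, and the induction $\|x_k-x_0\|\leq t_k$ gives (ii); existence comes from a Cauchy argument on the exact Newton iterates (the increments contract with ratio at most $\sqrt{3}Kt_d^*\leq\sqrt{3}-1<1$), and uniqueness from a direct Taylor estimate at $x^*$ using $Kt_d^*<1/2$. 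Both routes rest on the same three ingredients --- the discriminant bound $Kt_d^*\leq 1-1/\sqrt{3}$, Corollary~\ref{cor1} giving $\|Df(z)^{-1}Df(x_0)\|\leq\sqrt{3}$, and Theorem~\ref{taylor} applied to $Df(x_0)^{-1}f$ --- and your treatment of (iii) and (iv) coincides with the paper's (your linearization constant $\sqrt{3}-1$, obtained from $\|x_k-x^*\|\leq 2t_d^*$, is in fact the more carefully justified one). What your route buys is a transparent explanation of where the coefficient $3K/2$ and the shifts $\eta_d$, $1_d$ in $g_d$ come from, namely as the exact majorant of the perturbed iteration; what the paper's route buys is that its three lemmas about $L_z$ are proved once and reused verbatim across items (i)--(iii), and that uniqueness is free from the contraction principle rather than needing a separate estimate.
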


\begin{proof} We shall first prove that $0<t_d^*\leq \frac{1}{K}\left(1-\frac{1}{\sqrt{3}}\right)$:
 Notice that, defining $h(t)=t^2 - 4t + (1- 6K\eta)$, since $g_d(t)$ has a real zero by hypothesis, by the quadratic formula it follows that
\begin{equation*}
\left(1_d\right)^2 - 6K\eta_d \geq 0\Rightarrow 
(1+Kd)^2 - 6K(\eta+d) \geq 0 \Rightarrow 
\end{equation*}
\begin{equation*}
(Kd)^2 - 4\left(Kd\right) + (1 - 6K\eta) \geq 0 \Rightarrow h(Kd)\geq 0.
\end{equation*}
On the other hand, once again by the quadratic formula we notice that, since $\eta\geq 0$ and $K>0$, it follows that $h(t)$ has zeros $t_1$ and $t_2$ given by
\begin{equation*}
\begin{aligned}
t_1=2 - \sqrt{3+6K\eta}\mbox{ and }
t_2 = 2 + \sqrt{3+6K\eta},
\end{aligned}
\end{equation*} 
with $h(t)<0$ for all $t\in (t_1,t_2)$. Thus, since we proved above that $h(Kd)\geq 0$, it follows that $Kd\notin (t_1,t_2)$, but since due to item $(d)$ we have $Kd\leq 1\leq t_2$, we conclude that $Kd\leq t_1 = 2-\sqrt{3+6K\eta}$.On the other hand, the quadratic formula tells us that the smallest zero $t^*_d$ of $g_d(t)$ is given by
\begin{equation*} 0< t^*_d=\frac{1+Kd - \sqrt{(1+Kd)^2-6K(\eta+d)}}{3K}\leq \frac{1+Kd}{3K}
\end{equation*}
and since we proved above that $Kd\leq 2-\sqrt{3+6K\eta}$, it follows that
\begin{equation}\label{Ktl}Kt^*_d \leq \frac{1+Kd}{3}\leq 1-\sqrt{\frac{1}{3}+\frac{2}{3}K\eta}\leq 1-\frac{1}{\sqrt{3}}.
\end{equation}
which is what we wanted to prove.

Proof of $(i)$:
Given any $z\in \bar{B}(x_0,t_d^*)$ such that $Df(z)$ is invertible, we define $L_z:U\to X$ by $L_z(x)=x-Df(z)^{-1}f(x)$ for all $x\in U$. We shall prove in the following that $L_{x_0}$ is a contraction from $\bar{B}(x_0,t^*_d)$ to $\bar{B}(x_0,t^*_d)$. Thus, due to the Banach fixed point theorem (see \cite[Theorem 2.1, p. 24]{1982-Chow}) it will follow that $L_{x_0}$ has a unique fixed point, which in turn proves item $(i)$ since it is clear that every zero of $f$ in $\bar{B}(x_0,t^*_d)$ is a fixed point of $L_{x_0}$ and vice-versa.

The proof that $L_{x_0}$ is a contraction on $\bar{B}(x_0,t_d^*)$. will follow from the three items bellow, which, although more general than what needed to prove item $(i)$, will actually be necessary to prove item $(ii)$.
\begin{itemize}
\item[(I)] $\left\|L_{x_0}(x)-L_{x_0}(y)\right\|\leq c\left\|x-y\right\|$ for all $x,y\in \bar{B}(x_0,t_d^*)$ for some $c<1$.
\item[(II)] $Df(z)$ is invertible for all $z\in \bar{B}(x_0,t_d^*)$ with $\left\|Df(z)^{-1}Df(x_0)\right\|\leq \frac{1}{1-Kt_d^*}$.
\item[(III)] $L_z\left(\bar{B}(x_0,t_d^*)\right)\subset \bar{B}(x_0,t_d^*-d)$ for all $z\in \bar{B}(x_0,t_d^*)$.
\end{itemize}
Proof of $(I)$: Notice that due to the chain rule $L_{x_0}$ is differentiable with $DL_{x_0}(w)=I - Df(x_0)^{-1}Df(w)=Df(x_0)^{-1}(Df(x_0)-Df(w))$ for all $w\in U$.
Thus, letting $x$, $y$ in $\bar{B}(x_0,t_d^*)$, by the Mean Value Theorem and $(b)$ we have
\begin{equation*}
\begin{aligned}
\left\|L_{x_0}(x)-L_{x_0}(y)\right\|\leq \sup_{w\in \bar{B}(x_0,t^*_d)}\left\|DL_{x_0}(w)\right\|\left\|x-y\right\|\\[1ex]
=\sup_{w\in \bar{B}(x_0,t^*_d)}\left\|Df(x_0)^{-1}(Df(w)-Df(x_0))\right\|\left\|x-y\right\|\\[1ex]
\leq K\sup_{w\in \bar{B}(x_0,t^*_d)}\left\|w-x_0\right\|\left\|x-y\right\|\leq Kt_d^*\left\|x-y\right\|.
\end{aligned}
\end{equation*}
But, from (\ref{Ktl}) we have that $ Kt_d^*<1$ and thus letting $c=Kt_d^*$, item $(I)$ is proved.

Proof of $(II)$: Letting $z\in \bar{B}(x_0,t_d^*)$, due to the Mean Value Theorem, the inequality in (\ref{Ktl}) and $(b)$ it follows that
\begin{equation*}
\begin{aligned}
\left\|1-Df(x_0)^{-1}Df(z)\right\|=\left\|Df(x_0)^{-1}(Df(z)-Df(x_0))\right\|\leq \\  K\left\|z-x_0\right\|\leq Kt^*_d < 1.
\end{aligned}
\end{equation*}
Therefore, since $Df(x_0)^{-1}$ is invertible with inverse $Df(x_0)\in \mathcal{L}(X,Y)$, due to Corollary \ref{cor1} it follows that $Df(z)$ is invertible with $Df(z)^{-1}\in\mathcal{L}(Y,X)$ and
\begin{equation*}
\left\|Df(z)^{-1}Df(x_0)\right\|\leq \frac{1}{1-Kt^*_d}.
\end{equation*}
which proves item $(II)$.

Proof of $(III)$: Let $z\in \bar{B}(x_0,t_d^*)$ be fixed. From item $(II)$ $L_z$ is well defined and from the chain rule it follows that $L_z:U\to X$ is differentiable with $DL_z(y)=I-Df(z)^{-1}Df(y)$ for all $y\in U$, and in particular \begin{equation}\label{geralnow}
DL_z(y)-DL_z(z)=Df(z)^{-1}(Df(y)-Df(z))
\end{equation}
for all $y\in U$. Therefore, from item $(II)$, (\ref{geralnow}) and $(b)$ it follows that
\begin{equation*}
\begin{aligned}
\left\|DL_z(y)-DL_z(z)\right\|= \left\|(Df(z)^{-1}Df(x_0))( Df(x_0)^{-1}(Df(y)-Df(z)))\right\|\\
\leq \left\|Df(z)^{-1}Df(x_0)\right\|\left\| Df(x_0)^{-1}(Df(y)-Df(z))\right\|\leq \frac{K}{1-Kt^*_d} \left\|y-z\right\|
\end{aligned}
\end{equation*}
for all $y\in \bar{B}(x_0,t_d^*)$. Thus, since $DL_z(z)=I-Df(z)^{-1} Df(z)=0$, from the Taylor Theorem (Theorem \ref{taylor}) we have
\begin{equation}\label{L1}\left\|L_z(x)-L_z(z)\right\|=\left\|L_z(x)-L_z(z)-DL_z(z)(x-z)\right\|\leq \frac{K\left\|x-z\right\|^2}{2(1-Kt^*_d)},
\end{equation}
for all $x\in \bar{B}(x_0,t_d^*)$. Moreover, since by definition $L_z(x_0)=x_0-Df(z)^{-1}f(x_0)$, it follows from item $(II)$ and $(b)$ that
\begin{equation}\label{L2}\left\|L_z(x_0)-x_0\right\| = \left\|(Df(z)^{-1}Df(x_0))(Df(x_0)^{-1}f(x_0))\right\|\leq \dfrac{\eta}{1-Kt^*_d}.
\end{equation}
On the other hand, since from (\ref{Ktl}) we have $Kt^*_d<1$, it follows that $1-Kt_d^*\neq 0$ and thus from $g_d(t^*_d)=0$ we have
\begin{equation}\label{eqq}
\begin{aligned}
\eta+ d-(1+Kd)t^*_d + \frac{3K}{2} (t^*_d)^2 = 0\Rightarrow \\
\eta-(1-Kt^*_d)(t^*_d-d) + \frac{K}{2}(t^*_d)^2= 0
\Rightarrow \\
\frac{\eta}{1-Kt^*_d}- t^*_d+ \frac{K}{2(1-Kt^*_d)}(t^*_d)^2 =-d\Rightarrow\\
\frac{1}{1-Kt^*_d}\left(\eta+\frac{K}{2}(t^*_d)^2\right)= t^*_d-d.
\end{aligned}
\end{equation}
Thus from (\ref{L1}), (\ref{L2}) and (\ref{eqq}) it follows that
\begin{equation*}
\begin{aligned}
\left\|x_0-L_z(z)\right\| \leq  \left\|x_0-L_z(x_0)\right\|+\left\|L_z(x_0) - L_z(z)\right\| \\
\leq \frac{1}{1-Kt^*_d}\left(\eta+\dfrac{K(t^*_d)^2}{2}\right)= t^*_d-d.
\end{aligned}
\end{equation*}
which proves that $L_z(\bar{B}(x_0,t_d^*))\subset \bar{B}\left(x_0,t_d^*-d\right)$ and thus we conclude the proof of item $(III)$, and  therefore of item $(i)$.

Proof of $(ii)$:
Since $L_z(z)-x_0=z-Df(z)^{-1}f(z)-x_0$ it follows that if $r\in X$ with $\left\|r\right\|\leq d$ then due to item $(III)$ above we have
\begin{equation*}\left\|z-Df(z)^{-1}f(z)+r-x_0\right\|\leq \left\|x_0-L_z(z)\right\|+d\leq  t^*_d.
\end{equation*}
Combining this with the results obtained in the proof of $(i)$ we can conclude that, given $z\in \bar{B}(x_0,t^*_d)$, it follows that $Df(z)$ is invertible and  $z-Df(z)^{-1}f(z)+r\in \bar{B}(x_0,t^*_d)$ for all $r\in X$ with $\left\|r\right\|\leq d$. Thus, using induction, we can conclude that the sequence  $\{ x_k \}_{k\in \N}$ defined by $x_{k+1}=T_f(x_k)+r_k$ for all $k\in \N$ is such that $Df(x_k)$ is invertible and $x_k\in \bar{B}(x_0,t^*_d)$ for all $k\in \N$, which proves $(ii)$.

Proof of $(iii)$: Let
 $L_{x_m}$ be defined by $L_{x_m}(x)=x - Df(x_m)^{-1}Df(x)$ as in the proof of item $(i)$. Then it follows that $x^*=x^* - Df(x_m)^{-1}f(x^*)$, and thus from inequality (\ref{L1}) we have
\begin{equation}\label{zuni}
\begin{aligned}
\left\|x^*-x_{m+1}\right\|=\left\|L_{x_m}(x^*)-L_{x_m}(x_{m})-r_m\right\|\leq \frac{K\left\|x^*-x_m\right\|^2}{2(1-Kt^*_d)}+\left\|r_m\right\|.
\end{aligned}
\end{equation}
Now, notice that due to (\ref{Ktl}) we have
\begin{equation}\label{uni}
Kt_d^*\leq 1-\frac{1}{\sqrt{3}}\Rightarrow \frac{1}{2(1 - Kt_d^*)}\leq \frac{\sqrt{3}}{2},
\end{equation}
Thus we conclude from (\ref{zuni}) and (\ref{uni}) that 
\begin{equation*}
\begin{aligned}
\left\|x^*-x_{m+1}\right\|\leq \frac{\sqrt{3}}{2}K\left\|x^*-x_m\right\|^2+\left\|r_m\right\|
\end{aligned}
\end{equation*}
which proves item $(iii)$.

Proof of $(iv)$: Given $m\in \N$, since we proved $Kt^*_{d}\leq 1-\dfrac{1}{\sqrt{3}}$ and since from $(ii)$ we have that $x_m\in \bar{B}(x_0,t_d^*)$ it follows that
\begin{equation*}\frac{\sqrt{3}}{2}K\left\|x^*-x_m\right\|\leq \frac{\sqrt{3}}{2}Kt^*_{d}\leq \frac{\sqrt{3}-1}{2}.
\end{equation*}
Thus, from item $(iii)$ it follows that, for all $m\in \N$
\begin{equation}\label{ineqnow}\left\|x^*-x_{m+1}\right\|\leq \frac{\sqrt{3}}{2}K\left\|x^*-x_{m}\right\|^2 + \left\|r_m\right\|\leq \left(\frac{\sqrt{3} - 1}{2}\right)\left\|x^*-x_{m}\right\|+\left\|r_m\right\|.
\end{equation}
Now, given $\epsilon>0$, by definition of $\liminf$ there exists $N\in \N$ such that $k\geq N$ implies $\left\|r_k\right\|\leq \epsilon$. Thus, considering the sequence $\{ h_k \}_{k\geq N}$ defined by recurrence via
\begin{equation*}
\begin{aligned}
h_N = \left\|x^*-x_N\right\|\mbox{ and}\\
h_{k+1}=h_k\left(\frac{\sqrt{3}-1 }{2}\right)+\epsilon\mbox{ for all }k\geq N
\end{aligned}
\end{equation*}
it follows from inequality (\ref{ineqnow}) that $\left\|x^*-x_k\right\|\leq h_k$ for all $k\geq N$. On the other hand, one can verify via induction that the recurrence above implies in the formula
\begin{equation*}h_{N+k} = h_N \left(\frac{\sqrt{3}-1}{2}\right)^k+\sum_{m=0}^{k-1}\epsilon \left(\frac{\sqrt{3}-1}{2}\right)^m
\end{equation*}
for all $k\in \N$ and thus
\begin{equation*}\lim_{k\to \infty} h_k = \sum_{m=0}^\infty \epsilon\left(\frac{\sqrt{3}-1}{2}\right)^m = \frac{\epsilon}{1 - \left(\frac{\sqrt{3}-1}{2}\right)} =\left(1+\frac{1}{\sqrt{3}}\right) \epsilon.
\end{equation*}
Therefore, since $\left\|x^*-x_k\right\|\leq h_k$ for all $k\geq N$, it follows that
\begin{equation*}\liminf_{k\to \infty}\left\|x^*-x_k\right\|\leq \left(1+\frac{1}{\sqrt{3}}\right)\epsilon
\end{equation*}
and since $\epsilon>0$ was arbitrary chosen, letting $\epsilon\to 0^+$ in the above inequality, the result follows.
\end{proof}

\begin{remark}
For actual applications of the theorem above, we actually just compute the first $m+1$ terms $\{ x_k \}_{0 \leq k\leq m}$ and first $m$ error terms $\{ r_k \}_{0 \leq k \leq {m-1}}$ of the inexact Newton method and to apply the theorem we suppose that $r_k=0$ and $x_{k+1}=T_f(x_k)$ for all $k \geq m$.
\end{remark}

\section{Bijectivity Modulus and reformulation of the Main Result}\label{sec:BijectivityModulus}

To apply the Theorem \ref{fundtheo} to rigorously verify the existence of a true zero for a partial differential equation near a numerical zero, we will use the bijectivity modulus defined below.

\begin{definition} Let $X$ and $Y$ be Banach spaces. For $F\in \mathcal{L}(X,Y)$ we define the \textit{bijectivity} modulus $\lambda(F)$ of $F$ by

\begin{equation*}\lambda(F)=\begin{cases} \|F^{-1}\|^{-1}\mbox{ if }F\mbox{ is invertible,}\\
0\mbox{ otherwise.}
\end{cases}
\end{equation*}
\end{definition}
\begin{remark}\label{computeBijMod} If $F:\R^m\to \R^m$ is a linear operator such that $F(u)=Au^T$ for all $u\in \R^m$, where $A\in M_m(\R)$ then a lower bound for $\lambda(F)$ can be computed by proving that $A$ (and thus $F$ as well) is invertible and then computing an upper bound for $\left\|F^{-1}\right\|_{\mathcal{L}(\R^m,\R^m)}=\left\|A^{-1}\right\|_2\leq \left\|A^{-1}\right\|_F$ where $\left\|.\right\|_F$ corresponds to the Frobenius norm
$\left\|B\right\|_F = \sqrt{\sum_{i=1}^m\sum_{j=1}^m b_{ij}^2 }$ for all $B=(b_{ij})\in M_n(\R)$.
\end{remark}

\begin{proposition}
\label{basic2}
Given $F\in \mathcal{L}(X,Y)$ and $G\in \mathcal{L}(X,Y)$ we have that $\left|\lambda(F)-\lambda(G)\right|\leq \left\|F-G\right\|$.
\end{proposition}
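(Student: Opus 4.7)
The plan is to split into three cases according to the invertibility of $F$ and $G$. If both fail to be invertible, then $\lambda(F) = \lambda(G) = 0$ and the desired inequality is trivial.

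For the case where both $F$ and $G$ are invertible, I would first establish the alternative characterization
\begin{equation*}
\lambda(F) = \inf_{\|x\|_X = 1}\|Fx\|_Y,
\end{equation*}
which follows by writing $\|F^{-1}\|_{\mathcal{L}(Y,X)} = \sup_{y\neq 0}\|F^{-1}y\|_X/\|y\|_Y$ and substituting $y = Fx$, valid because $F$ is a bijection; the supremum over $y$ translates to $(\inf_{x\neq 0}\|Fx\|_Y/\|x\|_X)^{-1}$, so taking reciprocals gives the displayed formula. Then, for any unit vector $x\in X$, the triangle inequality yields $\|Fx\|_Y \leq \|Gx\|_Y + \|(F-G)x\|_Y \leq \|Gx\|_Y + \|F-G\|$, and taking the infimum over $\|x\|_X = 1$ gives $\lambda(F) \leq \lambda(G) + \|F-G\|$. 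Swapping the roles of $F$ and $G$ yields $|\lambda(F)-\lambda(G)| \leq \|F-G\|$ in this case.

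The subtle case is when exactly one of the operators is invertible, say $F$ invertible and $G$ not, where one must show $\lambda(F) \leq \|F - G\|$. The strategy is a Neumann series perturbation argument via contrapositive: assume $\|F-G\| < \lambda(F) = \|F^{-1}\|^{-1}$, so that
\begin{equation*}
\|I - F^{-1}G\|_{\mathcal{L}(X,X)} = \|F^{-1}(F - G)\|_{\mathcal{L}(X,X)} \leq \|F^{-1}\|\,\|F-G\| < 1.
\end{equation*}
Proposition~\ref{thm:Neumann_series} then forces $F^{-1}G$ to be invertible in $\mathcal{L}(X,X)$, and composing with $F$ shows $G = F(F^{-1}G)$ is invertible, contradicting the hypothesis; hence $\lambda(F) \leq \|F-G\|$, which is what we need since $\lambda(G) = 0$. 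The main obstacle here (and the reason this case cannot be merged with the previous one) is that the infimum characterization $\lambda(G) = \inf_{\|x\|_X=1}\|Gx\|_Y$ can fail when $G$ is bounded below but not surjective; so the argument must route through a perturbation-of-the-identity statement rather than a direct comparison of infima.
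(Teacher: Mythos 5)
Your proof is correct, but note that the paper itself offers no argument for Proposition~\ref{basic2} --- it simply cites \cite{RamosTwopoint2020} --- so there is no internal proof to compare against; your write-up is a complete, self-contained substitute. The three-way case split is exactly the right structure. In the doubly-invertible case, the identity $\lambda(F)=\inf_{\|x\|_X=1}\|Fx\|_Y$ is correctly justified by the change of variables $y=Fx$ (legitimate precisely because $F$ is a bijection with bounded inverse), and the triangle-inequality comparison of the two infima gives the two-sided bound. In the mixed case, the contrapositive Neumann-series argument is sound: if $\|F-G\|<\|F^{-1}\|^{-1}$ then $\|I-F^{-1}G\|\leq\|F^{-1}\|\,\|F-G\|<1$, so Proposition~\ref{thm:Neumann_series} (applied in $\mathcal{L}(X,X)$, where it genuinely lives) makes $F^{-1}G$ invertible and hence $G=F(F^{-1}G)$ invertible, a contradiction; this yields the non-strict bound $\lambda(F)\leq\|F-G\|$, which is all that is needed since $\lambda(G)=0$. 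Your closing remark is also the key subtlety and is worth keeping: for non-invertible $G$ the quantity $\inf_{\|x\|_X=1}\|Gx\|_Y$ can be strictly positive (injective, closed range, not surjective), so the definition $\lambda(G)=0$ forces the mixed case to be handled by a perturbation-of-invertibility argument rather than by comparing infima; this is why the cases cannot be merged.
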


\begin{proof}
See \cite{RamosTwopoint2020}.
\end{proof}

\begin{theorem}
\label{fundtheobij}
Let $X$ and $Y$ be Banach spaces, $U \subset X$ an open set, $f \colon U \subset X \to Y$ a differentiable function in $U$, $x_0 \in U$, $R > 0$ satisfying $\bar{B}(x_0, R) \subset U$, let $\{ r_k \}_{k \in \N}$ be a sequence in $X$, and let $\eta \geq 0$, $\nu\geq 0$, $K>0$ and $d>0$. Suppose that
\begin{enumerate}[$(a)$]
\itemsep1em
\item $\left\|f(x_0)\right\|\leq \eta$, $\lambda(Df(x_0))\geq \nu$ and $\left\| Df(x)-Df(y) \right\| \leq K \left\| x-y \right\|$ for all $x,y \in \bar{B}(x_0,R)$.
\item $\left\| r_k \right\| \leq d$ for all $k \in \N$.
\item $g_d(t) = \eta_d-\nu_d t+\dfrac{3K}{2}t^2$ has a a smallest real zero $t_d^*\leq R$ and moreover $d\leq \dfrac{\nu}{K}$, where $\eta_d=\eta+\nu d$ and $\nu_d=\nu+Kd$.
\end{enumerate}

Then $\nu>0$, $0<t_d^*\leq \frac{\nu}{K}\left(1-\frac{1}{\sqrt{3}}\right)$ and moreover

\begin{enumerate}[$(i)$]
\itemsep1em 
\item The function $f$ has a unique zero $x^* \in \bar{B}\left(x_0,t_d^*\right)$.
\item The sequence $\{ x_k \}_{k\in \N}$ defined by $x_{k+1} = T_f(x_k)+r_k$ for all $k\in \N$ is feasible for the Newton method for $f$ in $\bar{B}(x_0,t_d^*)$.
\item $\left\|x^*-x_{k+1}\right\|\leq \dfrac{\sqrt{3}}{2}K\nu^{-1}\left\|x^*-x_k\right\|^2+\left\|r_k\right\|$ for all $k\in \N$.
\item$\lim_{k\to \infty}x_k=x^*$ in case $\lim_{k\to \infty}\left\|r_k\right\|=0$.
\item $\left\|y-T_f(x_k)\right\|\leq \frac{\sqrt{3}}{\nu}\left\|Df(x_k)(x_k-y)-f(x_k)\right\|$.for all $y\in U$ and $k\in \N$.
\end{enumerate}
\end{theorem}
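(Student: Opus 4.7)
The plan is to reduce Theorem \ref{fundtheobij} directly to Theorem \ref{fundtheo} via a rescaling, and then to extract the new item $(v)$ from intermediate bounds appearing inside the proof of Theorem \ref{fundtheo}. First I would observe that $\nu>0$: the hypotheses $d>0$ and $d\leq \nu/K$ together force $\nu \geq Kd > 0$, which in turn guarantees that $Df(x_0)$ is invertible with $\left\|Df(x_0)^{-1}\right\|\leq 1/\nu$.

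Next, I would apply Theorem \ref{fundtheo} with rescaled constants $\tilde{\eta}=\eta/\nu$ and $\tilde{K}=K/\nu$, keeping the same $d$. The bounds $\left\|Df(x_0)^{-1}f(x_0)\right\|\leq \tilde{\eta}$ and $\left\|Df(x_0)^{-1}(Df(x)-Df(y))\right\|\leq \tilde{K}\left\|x-y\right\|$ follow by multiplying the bounds in $(a)$ by $\left\|Df(x_0)^{-1}\right\|\leq 1/\nu$. The associated quadratic $\tilde{g}_d(t) = \tilde{\eta}+d - (1+\tilde{K}d)t+(3\tilde{K}/2)t^2$ satisfies $\nu\,\tilde{g}_d(t)=g_d(t)$, so both polynomials share the same smallest positive zero $t_d^*$; moreover $d\leq \nu/K$ becomes $d\leq 1/\tilde{K}$. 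Hence every hypothesis of Theorem \ref{fundtheo} holds, and items $(i)$--$(iv)$ of Theorem \ref{fundtheobij} are immediate translations of the corresponding items of Theorem \ref{fundtheo}, with $K$ in item $(iii)$ replaced by $\tilde{K}=K/\nu$ and the bound $t_d^*\leq (1/\tilde{K})(1-1/\sqrt{3})$ becoming $(\nu/K)(1-1/\sqrt{3})$.

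For the new item $(v)$, once $(ii)$ secures the invertibility of $Df(x_k)$, I would rewrite
\begin{equation*}
y - T_f(x_k) = -Df(x_k)^{-1}\bigl(Df(x_k)(x_k-y) - f(x_k)\bigr),
\end{equation*}
so that $\left\|y-T_f(x_k)\right\|\leq \left\|Df(x_k)^{-1}\right\|\,\left\|Df(x_k)(x_k-y)-f(x_k)\right\|$, and then argue that $\left\|Df(x_k)^{-1}\right\|\leq \sqrt{3}/\nu$. This last bound is pulled from step $(II)$ inside the proof of Theorem \ref{fundtheo}: since $x_k\in \bar{B}(x_0,t_d^*)$, one has $\left\|Df(x_k)^{-1}Df(x_0)\right\|\leq 1/(1-\tilde{K}t_d^*)\leq \sqrt{3}$ using $\tilde{K}t_d^*\leq 1-1/\sqrt{3}$; multiplying by $\left\|Df(x_0)^{-1}\right\|\leq 1/\nu$ yields the required estimate.

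The main obstacle is essentially bookkeeping: the delicate point is verifying that the rescaled quadratic $\tilde{g}_d$ genuinely shares the zeros of $g_d$ and that the rescaled Lipschitz constant $\tilde{K}$ correctly inherits the condition $d\leq 1/\tilde{K}$. Once the algebraic identification is clean, no new analytical ingredient beyond Theorem \ref{fundtheo} is needed; item $(v)$ is then a one-line consequence of an estimate already established inside the earlier proof.
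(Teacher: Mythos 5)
Your proposal is correct, and for items $(i)$--$(iv)$ it is essentially identical to the paper's argument: deduce $\nu\geq Kd>0$, hence $\left\|Df(x_0)^{-1}\right\|=\lambda(Df(x_0))^{-1}\leq \nu^{-1}$, rescale to $\tilde{\eta}=\eta/\nu$, $\tilde{K}=K/\nu$, observe that $\nu\,\tilde{g}_d=g_d$ so the smallest zero $t_d^*$ and the condition $d\leq 1/\tilde{K}$ carry over, and invoke Theorem \ref{fundtheo}. The only genuine divergence is in item $(v)$: the paper bounds $\left\|Df(x_k)^{-1}\right\|$ via Proposition \ref{basic2}, i.e.\ the Lipschitz property of the bijectivity modulus, getting $\lambda(Df(x_k))\geq \nu-\nu\left(1-\tfrac{1}{\sqrt{3}}\right)=\nu/\sqrt{3}$ directly from $\left\|Df(x_k)-Df(x_0)\right\|\leq Kt_d^*$; you instead reuse the Neumann-series estimate of step $(II)$ inside the proof of Theorem \ref{fundtheo}, namely $\left\|Df(x_k)^{-1}Df(x_0)\right\|\leq (1-\tilde{K}t_d^*)^{-1}\leq\sqrt{3}$, and compose with $\left\|Df(x_0)^{-1}\right\|\leq\nu^{-1}$. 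Both routes yield the same constant $\sqrt{3}/\nu$ and both are valid (your version has the small cost of relying on an estimate internal to another proof rather than on a stated proposition, but it avoids Proposition \ref{basic2} altogether). No gap.
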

\begin{proof} Proof of (i) to (iv): Notice that from hypothesis $0<d\leq \dfrac{\nu}{K}$ and therefore $\nu>0$. Thus it follows that $\lambda(Df(x_0))\geq \nu>0$, which by definition implies that $Df(x_0)$ is invertible with $\|Df(x_0)^{-1}\|=\lambda(Df(x_0))^{-1}\leq \nu^{-1}$. Therefore, letting $\eta^*=\frac{\eta}{\nu}$ and $K^*=\frac{K}{\nu}$ it follows that
\begin{equation*}
\begin{aligned}
\|Df(x_0)^{-1}f(x_0)\|\leq \|Df(x_0)^{-1}\| \; \| f(x_0)\|\leq \eta^*
\end{aligned}
\end{equation*}
and
\begin{equation*}
\begin{aligned}
\|Df(x_0)^{-1}(Df(x)-Df(y))\|  \leq \|Df(x_0)^{-1}\| \; \|(Df(x)-Df(y))\| \leq K^*\|x-y\|
\end{aligned}
\end{equation*}
for all $x,y\in A$. Thus, the hypothesis $(a)$ and $(b)$ of Theorem \ref{fundtheo} are satisfied. Therefore, letting $\eta^*_d = \eta^*+\nu^*d$ and $\eta^*_d=\eta^*+K^*d$ from item $(ii)$ it follows that the polynomial $g^{*}_d(t)=\eta^*_d - \nu_d^* t + \dfrac{3 K^*}{2}t^2=\nu^{-1}g_d(t)$ has a smallest zero $t^*_d\leq R$, and thus from Theorem \ref{fundtheo}, items $(i)$ to $(iv)$ follows.

Proof of $(v)$: Notice that, due to Proposition \ref{basic2} it follows that
\begin{equation*}
\begin{aligned}\left|\lambda(Df(x_k))-\lambda(Df(x_0))\right|\leq \left\|Df(x_k)-Df(x_0)\right\|\\
\end{aligned}
\end{equation*}
\begin{equation*}
\begin{aligned}
\leq K\left\|x_k-x_0\right\|\leq Kt_d^*\leq \nu\left(1-\frac{1}{\sqrt{3}}\right)\Rightarrow
\end{aligned}
\end{equation*}
\begin{equation*}
\begin{aligned}
\lambda(Df(x_k))\geq \lambda(Df(x_0))-\nu\left(1-\frac{1}{\sqrt{3}}\right)\geq \nu-\nu\left(1-\frac{1}{\sqrt{3}}\right)=\frac{\nu}{\sqrt{3}}>0,
\end{aligned}
\end{equation*}
and thus
\begin{equation*}
\begin{aligned}
\left\|y-T_f(x_k)\right\|=\left\|Df(x_k)^{-1}(Df(x_k)(x_k-y)-f(x_k))\right\|
\end{aligned}
\end{equation*}
\begin{equation*}
\begin{aligned}
\leq \frac{\sqrt{3}}{\nu}\left\|Df(x_k)(x_k-y)-f(x_k)\right\|.
\end{aligned}
\end{equation*}
Proving the theorem.
\end{proof}
\begin{remark}\label{remark3}
Supposing the hypothesis of Theorem \ref{fundtheobij} is true and letting $\{x_k\}_{k\leq n}$ be such that $x_{k+1}=T_f(x_k)+r_k$ and  $\left\|r_k\right\|\leq d$ for all $k< n$, once again we can let $r_k=0$ for all $k\geq n$ and use the conclusions of Theorem \ref{fundtheobij}. In special, $(v)$ can be used to estimate $\left\|y-T_f(x_n)\right\|$ for any $y\in U$, and thus it can be used to estimate $\left\|x_{k+1}-T_f(x_k)\right\|$ where $x_{n+1}$ is the candidate for new term in the Inexact Newton Method.
\end{remark}

\section{Applications}\label{sec:Applications}

To apply Theorem \ref{fundtheobij}  we shall show how to use the Inexact Newton Method for the non-linear two point boundary value problem of Neumann type of the form
\begin{equation}\label{princeq}u'' = f(x,u),\ u'(0)=u'(1)
\end{equation}
where $f:\R^2\to \R$ is a bi-dimensional real function.

Letting $I=(0,1)$ and $I'=[0,1]$, Since it is known that $H^2(I)$ can be regarded as $C^1(I')$ functions (see \cite[Theorem 8.2]{2011-Brezis}), we let $H^2_N(I)=\{u\in H^2(I)\ |\ u'(0)=u'(1)\}$. It follows that the zeros $u\in H^2(I)$ of the two point boundary value problem in (\ref{princeq}) are the zeros of the operator
 $\mathcal{F}: H^2_N(I) \to L^2(I)$  defined by
 \begin{equation}\label{operatorf}\mathcal{F}(u) = u'' - f(x,u)\ \forall u\in H^2_N(I).
 \end{equation}
 Following \cite{RamosTwopoint2020}, the functions $\pi_{\cos}: L^2(I)\to \ell^2(\N)$ and $h_{\cos}: H^2_N(I) \to \ell^2(\N)$ defined by
\begin{equation*}
\begin{aligned}
\pi_{\cos}(u)=(\widehat{u}_{\cos}(0),\widehat{u}_{\cos}(1),\cdots)\mbox{ and }
h_{\cos}(u) = (1,\omega(1)\widehat{u}_{\cos}(1),\omega(2)\widehat{u}_{\cos}(2),\cdots)
\end{aligned}
\end{equation*}
are isometric isomorphisms, where $\{\widehat{u}_{\cos}(k)\}_{k\geq 0}$ is the sequence of coefficients of $u$ in $L^{2}(I)$ cosine basis (see \cite[p. 145]{2011-Brezis}), and $\omega(k) = \sqrt{1+(\pi k)^2+(\pi k)^4}$ for $k\in \N$. Thus, we define $\pi_{\cos,m} :L^2(I)\to \R^m$ by
\begin{equation*}\pi_{\cos,m}(u)=(\widehat{u}_{\cos}(0),\cdots,\widehat{u}_{\cos}(m-1))
\end{equation*}
and let $h^{-1}_{\cos,m}:\R^m\to H^2_N(I)$ be the restriction of $h^{-1}_{\cos}$ to $\R^m\subset \ell^2(\N)$. Given an operator $\mathcal{G}:H^2_N(I)\to L^2(I)$, it is reasonable to consider the finite dimensional operator $\mathcal{G}_{\cos,m}:\R^m\to \R^m$ defined by
\begin{equation*}\mathcal{G}_{cos,m}=\pi_{\cos,m} \circ \mathcal{G} \circ h^{-1}_{\cos,m}
\end{equation*}
as a natural finite dimensional approximation for $\mathcal{G}$. Moreover, we define $\left\|g\right\|_{C^0(I')}=\sup_{x\in I'}\left|g(x)\right|$ for all $g\in C^0(I')$, $\left\|g\right\|_{C^1(I')}=\sup_{x\in I'}\sqrt{g(x)^2+g'(x)^2}$ for all $g\in C^1(I')$, and $c_1=\left(\tanh(1)\right)^{-\frac{1}{2}}=\sqrt{\frac{\exp(2)+1}{\exp(2)-1}}$.

With these definitions in mind, given $m>0$, and $f:\R^{2}\to \R$ a $C^2$ elementary function, and an initial point $u_0=h^{-1}_{\cos,m}(b_0)\in H^2_N(I)$, where $b_0\in \R^m$, we show in the following how to fulfill all items needed to apply the Inexact Newton Method and Theoerem \ref{fundtheobij} for $\mathcal{F}$ beginning at any point $u_0$ such that $u_0=h_{\cos}^{-1}(b_0)$, for $b_0\in \R^m$.
\vspace{0.3cm}

\noindent \textbf{Computation of $\eta$:}
\vspace{0.3cm}

Since $u_0$ is an elementary function (finite sum of cosine functions) it follows that $\left\|\mathcal{F}(u_0)\right\|$  will correspond to the integral of an elementary function, and thus we can compute a fine interval enclosure for these value using the Simpson rule with explicit error term (see Theorem 12.1 in \cite{2010-Rump}).

\vspace{0.3cm}
\noindent \textbf{Computation of $\nu$:}
\vspace{0.3cm}

Following \cite{RamosTwopoint2020} and supposing that
\begin{equation*}
\begin{aligned}
\left\|f_u(x,u_0)\right\|_{C^0(I)}+\left\|f_u(x,u_0)\right\|_{C^1(I)}\leq N,
\end{aligned}
\end{equation*}
we have $\mathcal{F}:H^2_N(I)\to L^2(I)$, as defined in (\ref{operatorf}), is Frechét-differentiable, $\lambda\left(D\mathcal{F}(u_0)\right)\in\left[L-\frac{N}{\pi m},L+\frac{N}{\pi m}\right]$, where $L$ is defined as \[L=\min\left(\lambda\left(D\mathcal{F}(u_0)_{\cos, m}\right), \dfrac{(\pi m)^2}{\omega(m)}\right),\] and moreover, a lower bound for $\lambda(D\mathcal{F}(u_0)_{\cos,m})$ can be computed using interval arithmetic, see Remark \ref{computeBijMod}.

\vspace{0.3cm}
\noindent \textbf{Computation of $K$:}
\vspace{0.3cm}

Once again following \cite{RamosTwopoint2020}, if $r>0$ and
\begin{equation*}
\begin{aligned}
c_1\left\|f_{uu}(x,u)\right\|\leq K \mbox{ for all }(x,u)\in I'\times [-c_1r,c_1r],
\end{aligned}
\end{equation*}
then $K$ is a Lipschitz constant for $D\mathcal{F}$ in
 $\bar{B}\left(0,r\right)_{H^2(I)}$.
 
\vspace{0.3cm}
\noindent \textbf{Computation of the sequence $\{u_k\}_{k\in \N}$ and error terms $r_k$:}
\vspace{0.3cm}

 The sequence $\{u_k\}_{k\in \N}$ for the Inexact Newton Method for $\mathcal{F}$ in $H^2_N(I)$ can be chosen computing non-rigorously the sequence $\{b_k\}_{k\in \N}$ for the Newton Method $b_{k+1}=T_{\mathcal{F}_{\cos,m}}(b_k)$, $b_0=h_{\cos}(u_0)\in \R^m$ for the finite dimensional function $\mathcal{F}_{\cos,m}:\R^m\to \R^m$ and then letting $u_k=h^{-1}_{\cos,m}(b_k)$ for all $k\in \N$. Since all $u_k$ will be elementary functions, it follows that rigorous enclosures for the errors $\left\|r_k\right\|=\left\|u_{k+1}-T_{\mathcal{F}}(u_k)\right\|$ can be computed using the Simpson rule with rigorous error terms over the error formula given by item $(v)$ of Theorem \ref{fundtheobij}.

\begin{example}\label{example1} 
Let $\mathcal{F}:H^2_N(I)\to L^2(I)$ be a operator defined by $\mathcal{F}(u)(x) =u''(x) - \left(\sin(u(x)) - \cos(2\pi x)\right)$ for all $u\in H^2_N(I)$ and $x\in I$. From \cite{RamosTwopoint2020}, it is easy to see that $b_0=(0,0,0.7)$ is an approximate zero of $\mathcal{F}_{\cos,3}$ in $\bar{B}(0,1)_{\R^3}$ and thus we let
\begin{equation*}u_0=h_{\cos}^{-1}(b_0)=\frac{7}{10}\frac{\sqrt{2}\cos(2 \pi x)}{w(2)}
\end{equation*}
as an approximate zero for $\mathcal{F}$ in $\bar{B}(0,1)_{H^2_N(I)}$. Thus, following the above discussion, we compute through interval means the constants
\begin{equation*}K=c_1^2=\tanh(1)\mbox{ and }N = 2.01
\end{equation*}
such that $K$ is a Lipschitz constant for $\mathcal{F}$ in $\bar{B}(0,1)_{H^2_N(I)}$ and $\lambda(D\mathcal{F}(u_0))\in\left[L-\frac{N}{\pi m},L+\frac{N}{\pi m}\right]$, where
\begin{equation*}
\begin{aligned}
 \lambda(D\mathcal{F}(u_0)_{\cos,3})\geq  \left\|D\mathcal{F}(u_0)_{\cos,3}^{-1}\right\|_F^{-1} \geq 0.58\Rightarrow\\
 L= \min\{\lambda(D\mathcal{F}(u_0)_{\cos,3}),\frac{(3 \pi)^2}{w(3)}\}\geq 0.58
 \end{aligned}
 \end{equation*}
 and thus
 \begin{equation*} \lambda(D\mathcal{F}(u_0))\geq  0.58-\frac{2.01}{3\pi}\geq 0.36 = \nu.
\end{equation*}
Moreover, we computed directly from the Simpson rule with explicit error terms that
\begin{equation*}
\begin{aligned}
 \|\mathcal{F}(u_0) \|_{L^2(I)}\leq 2\cdot 10^{-3}=\eta.\\
\end{aligned}
\end{equation*}
Finally, letting $d= 10^{-2}$ we obtain a zero $t_d^*$ for $g_d(t)=\eta_d - \nu_d t + \frac{3K}{2}t^2$ satisfying $t_d^*\leq 2\cdot 10^{-2}$. Thus, all conditions of Theorem \ref{fundtheobij} are satisfied and we can apply the Inexact Newton Method. From item $(i)$ of Moreover, from theorem \ref{fundtheobij}, there exists a zero $u^*$ of $\mathcal{F}$ in $\bar{B}(0,1)_{H^2_N(I)}$ such that
\begin{equation*}\left\|u^*-u_0\right\|_{H^2(I)}\leq 2 \cdot 10^{-2}.
\end{equation*}

\begin{figure}[h!]\label{figm1}
\centering
 \includegraphics[width=\textwidth]{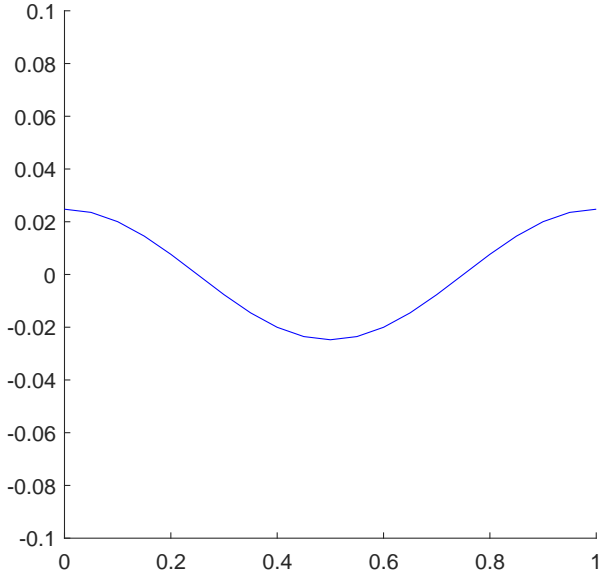}
\caption{Numerical zero $u_3$ of $\mathcal{F}$.}
\end{figure}

The next candidates $u_1$, $u_2$, $u_3$ for the Inexact Newton Method were computed using  by $u_i=h_{\cos}^{-1}(b_i)$ for $1\leq i\leq 3$, where $b_i$, $1\leq i\leq 3$ are the next terms in the Newton Method for $\mathcal{F}_{\cos,10}$ starting at $b_0=(0,0,0.7,\cdots,0)\in \R^{10}$. Thus, using item $(v)$ of Theorem \ref{fundtheobij} the errors $r_i=u_{i+1}-T_f(u_i)$, $0\leq i\leq 2$ were proven to satisfy
\begin{equation*}
\begin{aligned}
\left\|r_i\right\|\leq 2\cdot 10^{-10}\mbox{ for }0\leq i\leq 2.
\end{aligned}
\end{equation*}
Therefore, since $\dfrac{\sqrt{3}}{2}K\nu^{-1}\leq 3.16$ using recursively item $(iii)$ of Theorem \ref{fundtheobij} we conclude that
\begin{equation*}\left\|u^*-u_3\right\|_{H^2(I)}\leq 3\cdot 10^{-10}.
\end{equation*}
Thus, using Theorem \ref{fundtheobij} and the Inexact Newton Method, it was possible to obtain a better approximation for a zero of $\mathcal{F}$ compared to the one obtained in \cite{RamosTwopoint2020}.

\end{example}

\section{Conclusion}\label{sec:Conclusion}

In this paper, we proposed a new theorem for the feasibility and convergence of the inexact Newton method, with explicit convergence rate formulas similar to that of the Newton-Kantorovich theorem. After that, we connected our new definition of bijectivity modulus with this new theorem to verify with rigour zeros for a differential operator $\mathcal{F}$, using the inexact Newton method.

As we saw in Example \ref{example1}, the inexact Newton method together to bijectivity modulus, built a powerful tool to verify with rigour zeros for a non-linear differential operator. Moreover, note that the non-linearity was not a problem in this kind of approach, against the most part of methods. Furthermore, this method allow us to locate the true zero and find out better (faster and more accurate) numerical solutions compared to previous work of the authors of this paper.




%
%

\bibliographystyle{spmpsci}      
\bibliography{reference}   

%
%

\end{document}